\begin{document}

\title{An Explicit Formula of the Intrinsic Metric on the Sierpinski Gasket via Code Representation}

\author{
Mustafa Saltan\footnote{Department of Mathematics, Anadolu University,  Turkey, \newline \ E-mail:{\tt mustafasaltan@anadolu.edu.tr}},
Yunus Ozdemir\footnote{Department of Mathematics, Anadolu University,  Turkey,
\ E-mail: {\tt  yunuso@anadolu.edu.tr}}\footnote{Corresponding Author.}
and
Bunyamin Demir\footnote{Department of Mathematics, Anadolu University,  Turkey,
\ E-mail: {\tt  bdemir@anadolu.edu.tr}}
}
{ }

\begin{abstract}
{The computation of the distance between any two points of the Sierpinski Gasket with respect to the intrinsic metric has already been investigated by several authors. In the literature there is not an explicit formula using the code space of the Sierpinski Gasket. In this paper, we give an explicit formula for the intrinsic metric on the Sierpinski Gasket via code representations of its points.}
\end{abstract}

\begin{keyword}
   Sierpinski Gasket, Code space, Intrinsic metric.
\end{keyword}

\begin{AMS}
28A80, 51F99
\end{AMS}

\section{Introduction}

The Sierpinski Gasket was described by W. Sierpinski in 1915, and then it became one of the typical examples of fractals.  Sierpinski Gasket has been studied in fractal geometry for years (see for example \cite{Barnsley,Kigami} for more information). It is well known that $S$ is the attractor of the iterated function system $\{\mathbb{R}^{2};f_{0},f_{1},f_{2}\}$ where
\begin{eqnarray*}
f_{0}(x,y)&=&\left(\frac{1}{2}x,\frac{1}{2}y\right)\\
f_{1}(x,y)&=&\left(\frac{1}{2}x+\frac{1}{2},\frac{1}{2}y\right)\\
f_{2}(x,y)&=&\left(\frac{1}{2}x+\frac{1}{4},\frac{1}{2}y+\frac{\sqrt{3}}{4}\right).
\end{eqnarray*}

\begin{figure}[h]
\centering\includegraphics[scale=0.7]{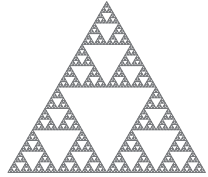}
\caption{The Sierpinski Gasket as an attractor of an IFS.}\label{sier}
\end{figure}

In \cite{Gabner, Hinz}, the authors define $S$ as follows:
Let $P_{0}=(0,0), P_{1}=(0,1)$ and $P_{2}=(\frac{1}{2},\frac{\sqrt{3}}{2})$. Assume that $i_{1}i_{2}\ldots i_{n}$ is the word of length $n$ over
the alphabet $X=\{0, 1, 2\}$ for any $i_{1},i_{2},\ldots, i_{n}\in X$. For every such word, it is denoted the elementary sub-triangle of level $n$ with vertices $f_{i_{1}}(P_{0})\circ f_{i_{2}}(P_{0})\circ\ldots \circ f_{i_{n}}(P_{0})$,  f$_{i_{1}}(P_{1})\circ f_{i_{2}}(P_{1})\circ\ldots \circ f_{i_{n}}(P_{1})$ and $f_{i_{1}}(P_{2})\circ f_{i_{2}}(P_{2})\circ\ldots \circ f_{i_{n}}(P_{2})$ by $T_{i_{1} i_{2} \ldots  i_{n}}$. Then they define the Sierpinski Gasket as \[S=\bigcup_{n\geq 0}T_{n} \  \ \text{where}\ \ T_{n}=\bigcup_{s\in \{0,1,2\}^{n}}T_{s}.\]

As known, it can be constructed several metric structures on a set. But, a metric which is not take into consideration its internal structure is far from being applicable. For example, consider the restriction of Euclidean metric to $S$. According to this metric, the distance  between $a$ and $b$ is $l$ (see Figure~\ref{s0}). However, there is not any path between $a$ and $b$ on $S$ with length $l$. For this reason, this metric is not meaningful on this special set. The intrinsic metric which is obtained by taking into account the paths on the structure, eliminates this discrepancy.

\begin{figure}[h]
\centering\includegraphics[scale=0.7]{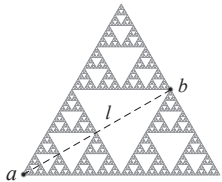}
\caption{Distance between two points on $S$ with respect to the Euclidean metric.}\label{s0}
\end{figure}

One can define the intrinsic metric on $S$ such that
\[d(x, y)=\inf \{\delta \ | \  \delta \ \text{is the length of a rectifiable curve in S joining} \ x \  \text{and} \ y \}\]
for $x, y \in S$ (for details see \cite{Burago}).

In the several works, the intrinsic metric on the Sierpinski Gasket was constructed and defined in different ways since there exist different ways to construct (or define) the Sierpinski Gasket (for details see \cite{Cristea, Gabner, Hinz, Kigami}). For example in \cite{Gabner}, it is given an alternative definition of the intrinsic metric on $S$ as follows: Let $x,y\in S$ and let $\Delta_{n}(x), \Delta_{n}(y) $ be two elementary sub-triangles of level $n$ where $x\in \Delta_{n}(x)$ and $y\in \Delta_{n}(y)$ for all $n\geq 0$. For every $n\geq 0$, the left lower vertices of $\Delta_{n}(x)$ and $ \Delta_{n}(y)$ respectively. Then the authors define the intrinic metric as
\[d(x, y)=\lim_{n\rightarrow \infty} \frac{d_{n}(x_{n},y_{n})}{2^{n}}\]
where $x,y\in S$.

R. Strichartz also defines the intrinsic metric in a different way by using barycentric coordinates (for details see \cite{Str}).

In \cite{Romik}, Romik tackle the discrete Sierpinski Gasket and define the metric giving the shortest distance on the points of this set using by the code spaces. Romik then compute the average distance between points on the Sierpinski Gasket using the connection between the Tower of Hanoi problem and the discrete Sierpinski Gasket.

In this paper, we use code representations of the points of the Sierpinski Gasket to define the intrinsic metric. We note that the junction points of the Sierpinski Gasket have two different code representations.
In this work, we give an explicit formula for the intrinsic metric on $S$ such that the formula does not depend on the choice of the representations of the junction points as mentioned in Proposition~\ref{temsilcidenbagimsiz}.

\bigskip

\section{Code representation on the Sierpinski Gasket}
We first give a small brief about the coding process.

Let us denote the left-bottom part, the right-bottom part and the upper part of the Sierpinski Gasket by $S_{0}, S_{1}$ and $S_{2}$ respectively (see Figure \ref{s1}).
\begin{figure}[ht]
\centering\includegraphics[scale=0.8]{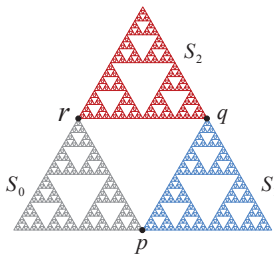}
\caption{The sub-triangles $S_{0}, S_{1}$ and $S_{2}$ of $S$.}\label{s1}
\end{figure}

As shown in Figure~\ref{s1}, $S=S_{0}\cup S_{1}\cup S_{2}$, $S_{0}\cap S_{1}=\{p\}$, $S_{1}\cap S_{2}=\{q\}$ and $S_{0}\cap S_{2}=\{r\}$. Let $a_{1}\in \{0,1,2\}$. Now similarly we denote the left-bottom part, the right-bottom part and the upper part of $S_{a_{1}}$ by $\displaystyle  S_{a_{1}0}, S_{a_{1}1}$ and $S_{a_{1}2}$ respectively (see Figure \ref{s2}).
\begin{figure}[ht]
\centering\includegraphics[scale=0.8]{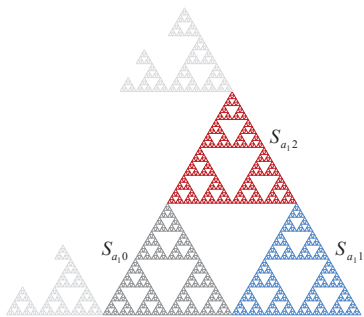}
\caption{The sub-triangles $S_{a_{1}0}, S_{a_{1}1}$ and $S_{a_{1}2}$ of $S_{a_{1}}$ for $a_{1}=1$.}\label{s2}
\end{figure}

With the same argument, let $S_{a_{1}a_{2}\ldots a_{k}}$ denote the smaller triangular pieces of $S$ where $a_{i}\in \{0,1,2\}$ and $i=1,2,\ldots, k.$ For the sequence
\[S_{a_{1}}, S_{a_{1}a_{2}}, S_{a_{1}a_{2}a_{3}},\ldots, S_{a_{1}a_{2}\ldots a_{n}},\ldots,\]
it is obvious that $S_{a_{1}}\supset S_{a_{1}a_{2}}\supset S_{a_{1}a_{2}a_{3}}\supset \ldots \supset S_{a_{1}a_{2} \ldots a_{n}} \supset\ldots $
and the infinite intersection
\[\bigcap_{k=1}^{\infty}S_{a_{1}a_{2} \ldots a_{k}}\]
is a singleton, say $\{a\}$ where $a\in S$. We denote the point $a\in S$  by $a_{1}a_{2}\ldots a_{n}\ldots $ where $a_{n}\in \{0,1,2\}$ and $n=1,2,\ldots $. Note that, if $a\in S$ is the intersection point of any two sub-triangles of $S_{a_{1}a_{2} \ldots a_{k}}$ (such a point is called a junction point of $S$) then $a$ has two different representations such that $a_{1}a_{2} \ldots a_{k}\beta\alpha\alpha\alpha\alpha\ldots$ and $a_{1}a_{2} \ldots a_{k}\alpha\beta\beta\beta\beta\ldots$ where $\alpha,\beta \in \{0,1,2\}$ (see Figure \ref{s3}). Otherwise, $a$ has a unique representation.

(For an alternative code space representation of the points of $S$, see \cite{Deniz}.)

\begin{figure}[h]
\centering\includegraphics[scale=0.8]{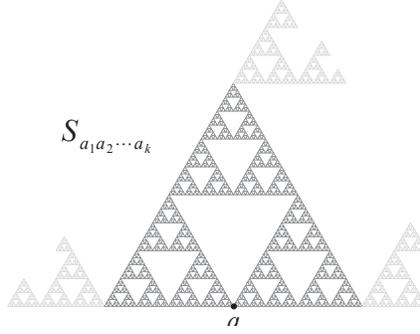}
\caption{The point $a$ for $\alpha=0$ and $\beta=1$.}\label{s3}
\end{figure}

\section{Construction of the intrinsic metric on $S$}
Let $a$ and $b$ be two different points of $S$ whose representations are $a=a_{1}a_{2} \ldots a_{n}\ldots$ and $b=b_{1}b_{2} \ldots b_{n}\ldots$ respectively. Then there exists a natural number $s$ such that $a_{s}\neq b_{s}$. Let \begin{equation}\label{find-k}
k=\min\{s \ | \ a_{s}\neq b_{s}, \ s=1,2,3,\ldots \}.
\end{equation}
We then have $a\in S_{a_{1}a_{2} \ldots a_{k-1}a_{k}}$ and $b\in S_{a_{1}a_{2} \ldots a_{k-1}b_{k}}$. Without lost of generality, we assume that $a_{k}=0$ and $b_{k}=1$ which means $a\in S_{a_{1}a_{2} \ldots a_{k-1}0}$ and $b\in S_{a_{1}a_{2} \ldots a_{k-1}1}$ as seen in Figure~\ref{s4} (we use the abbreviation $\sigma=a_{1}a_{2} \ldots a_{k-1}$ for simplicity). Note also that, in the other cases, i.e. $a$ and $b$ are in another sub-triangle of $S_{a_{1}a_{2} \ldots a_{k-1}}$, similar procedures would be valid.

\begin{figure}[h]
\centering\includegraphics[scale=0.7]{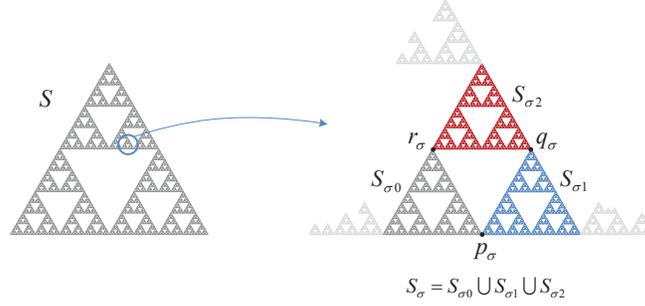}
\caption{The sub-triangle $S_{\sigma}$ where $\sigma=a_{1}a_{2} \ldots a_{k-1}$ and the points $a\in S_{\sigma 0}$ and $b\in S_{\sigma 1}$.}\label{s4}
\end{figure}

Let $p_{\sigma}$, $r_{\sigma}$, $q_{\sigma}$  be the intersection points of the sub-triangles $S_{\sigma0}$ and $S_{\sigma1}$, $S_{\sigma0}$ and $S_{\sigma2}$, $S_{\sigma1}$ and $S_{\sigma2}$ respectively. The shortest paths between $a$ and $b$  must pass through either the point $p_{\sigma}$ or the line $r_{\sigma}q_{\sigma}$ (see Figure \ref{s4}).

We now investigate these two different ways as follows:

\noindent\textbf{Case 1:} First consider the shortest path passing through the point $p_{\sigma}$.  Any path between $a$ and $b$ can be expressed as the union of a path between $a$ and $p_{\sigma}$ and a path between $p_{\sigma}$ and $b$. We first look at the shortest paths between $a$ and $p_{\sigma}$ (The paths between $p_{\sigma}$ and $b$ can be obtained using similar argument).

$\bullet$ If $a\in S_{a_{1}a_{2} \ldots a_{k-1}00}$ or $a\in S_{a_{1}a_{2} \ldots a_{k-1}02}$  then we must compute the length of the line segment $p_{\sigma'} p_{\sigma}$ or the length of the line segment $q_{\sigma'} p_{\sigma}$ where $p_{\sigma'}$,$q_{\sigma'}$ are the intersection points of the sub-triangles $S_{\sigma'0}$ and $S_{\sigma'1}$, $S_{\sigma'1}$ and $S_{\sigma'2}$ respectively where $\sigma'=a_{1}a_{2} \ldots a_{k-1}0$(see Figure \ref{s5}).
In the both cases, the length of the shortest paths between $a$ and $p_{\sigma}$ is
\[\displaystyle \mu =\frac{1}{2^{k+1}}+\varepsilon,\]
for some $\varepsilon\geq 0$.

For the case $a=r_{\sigma'}$, where $r_{\sigma'}$ is the intersection point of the sub-triangles $S_{\sigma'0}$ and $S_{\sigma'2}$, there exist obviously two shortest paths between $a$ and $p_{\sigma}$ (see Figure~\ref{s5}). These paths are the union of the line segments $r_{\sigma'} p_{\sigma'}$ and $p_{\sigma'} p_{\sigma}$ or the union of the line segments $r_{\sigma'} q_{\sigma'}$ and $q_{\sigma'} p_{\sigma}$. The length of these paths can be easily computed as $\mu =\dfrac{1}{2^{k}}.$

\begin{figure}[h]
\centering\includegraphics[scale=0.7]{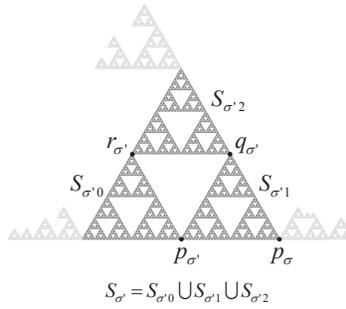}
\caption{The sub-triangle $S_{\sigma'}$ where $\sigma'=a_{1}a_{2} \ldots a_{k-1}0$ and the points $a\in S_{\sigma' 0}$ or $b\in S_{\sigma' 2}$.}\label{s5}
\end{figure}

$\bullet$ Suppose that $a\in S_{a_{1}a_{2} \ldots a_{k-1}01}$.

If $a\in S_{a_{1}a_{2} \ldots a_{k-1}010}$ or $a\in S_{a_{1}a_{2} \ldots a_{k-1}012}$, then we must compute the length of the line segment $p_{\sigma''}p_{\sigma}$ or the length of the line segment $q_{\sigma''}p_{\sigma}$ where $p_{\sigma''}$,$q_{\sigma''}$ are the intersection points of the sub-triangles $S_{\sigma''0}$ and $S_{\sigma''1}$, $S_{\sigma''1}$ and $S_{\sigma''2}$ respectively where $\sigma''=a_{1}a_{2} \ldots a_{k-1}01$(see Figure \ref{s6}).

In the both cases, we get  $$\mu=\frac{1}{2^{k+2}}+\varepsilon,$$
for some $\varepsilon\geq 0 $.

For the case $a=r_{\sigma''}$, where $r_{\sigma''}$ is the intersection point of the sub-triangles $S_{\sigma''0}$ and $S_{\sigma''2}$, there are two paths giving  the distance of the shortest paths between $a$ and $p_{\sigma}$ as were before. These paths are the union of the line segments $r_{\sigma''}p_{\sigma''}$ and $p_{\sigma''}p_{\sigma}$ or the union of the line segments $r_{\sigma''}q_{\sigma''}$ and $q_{\sigma''}p_{\sigma}$. The length of these two paths is $\mu =\frac{1}{2^{k+1}}$.

\begin{figure}[h]
\centering\includegraphics[scale=0.7]{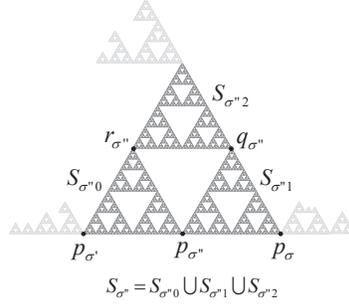}
\caption{The sub-triangle $S_{\sigma''}$ where $\sigma''=a_{1}a_{2} \ldots a_{k-1} \, 01$.}\label{s6}
\end{figure}

Using similar procedure for smaller triangles, we can determine the shortest paths between $a$ and $b$ and the length of these paths. Similarly one can determine the shortest paths between $p_{\sigma}$ and $b$. Then by splicing these shortest paths, between ``$a$ and  $p_{\sigma}$'' and ``$p_{\sigma}$ and $b$'', one can compute the length of the shortest paths between $a$ and $b$ passing through the point $p_{\sigma}$.

\noindent\textbf{Case 2:} Let us consider the shortest paths passing through the line segment $r_{\sigma}q_{\sigma}$. In a similar way, we can obtain the shortest paths (thus the corresponding length) between ``$a$ and $r_{\sigma}$'' and between ``$b$ and $q_{\sigma}$''. As we add $\frac{1}{2^{k}}$ (that is, the length of the path $r_{\sigma}q_{\sigma}$) to these length, we obtain the length of the shortest path passing through $r_\sigma q_\sigma$.

Consequently, the length of the shortest paths between $a$ and $b$ is the minimum of the lengths obtained from Case 1 and Case 2. We can formulate this length  (so the metric) as follows:

\bigskip

\begin{definition}\label{def1}
Let $a_{1}a_{2} \ldots a_{k-1}a_{k}a_{k+1} \ldots$ and $b_{1}b_{2} \ldots b_{k-1}b_{k}b_{k+1}\ldots $ be two representations respectively of the points $a\in S$ and $b\in S$ such that $a_{i}=b_{i}$ for $i=1,2,\ldots, k-1$ and $a_{k}\neq b_{k}.$ We define the distance $d(a,b)$ between $a$ and  $b$ as
\[d(a,b)=\min  \left\{\sum\limits_{i=k+1}^{\infty }\frac{\alpha_{i}+\beta_{i}}{2^{i}}\, , \, \frac{1}{2^{k}}+\sum\limits_{i=k+1}^{\infty }\frac{\gamma_{i}+\delta_{i}}{2^{i}} \right\}\]
where

\vspace{0.2cm}
$ \alpha _{i}= \left\{
\begin{array}{cc}
0, &  a_{i}=b_{k} \\
1, &  a_{i}\neq b_{k}
\end{array}%
\right. ,$

\vspace{0.2cm}

$ \beta _{i} =\left\{
\begin{array}{cc}
0, &  b_{i}=a_{k} \\
1, &  b_{i}\neq a_{k}
\end{array}
\right. ,$

\vspace{0.2cm}

$ \gamma _{i}= \left\{
\begin{array}{cl}
 0, &  a_{i}\neq a_{k}~\ \text{and} \text{ }a_{i}\neq b_{k} \\
 1, &  otherwise
\end{array}
\right. ,$

\vspace{0.2cm}

$ \delta _{i}=\left\{
\begin{array}{cl}
0, & b_{i}\neq b_{k}\text{ }\ \text{and} \text{ }b_{i}\neq a_{k} \\
1, &  otherwise
\end{array}
\right. . $
\end{definition}
\bigskip

\begin{remark}
Note that the first value $\sum\limits_{i=k+1}^{\infty }\frac{\alpha_{i}+\beta_{i}}{2^{i}}$ is the length of the shortest paths passing through the point $p_\sigma$
and the second value $\frac{1}{2^{k}}+\sum\limits_{i=k+1}^{\infty }\frac{\gamma_{i}+\delta_{i}}{2^{i}} $ is the length of the shortest paths passing through the line segment $r_\sigma q_\sigma$ where $\frac{1}{2^{k}}$ is the length of the line segment $r_\sigma q_\sigma$.
\end{remark}

\bigskip

\begin{proposition}
The distance function $d$ defined in Definition~\ref{def1} is strictly intrinsic metric on $S$.
\end{proposition}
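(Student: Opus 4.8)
The plan is to verify positivity, definiteness and symmetry directly from the formula of Definition~\ref{def1}, and then to prove the triangle inequality together with strict intrinsicness in one stroke by identifying $d$ with the length (intrinsic) metric built in the construction preceding the definition.

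First I would settle the elementary axioms. Because $d$ is written in terms of a code representation, I begin by invoking representation-independence (Proposition~\ref{temsilcidenbagimsiz}), so that $d$ is a well-defined function of the unordered pair $\{a,b\}$. Non-negativity is immediate, since $\alpha_i,\beta_i,\gamma_i,\delta_i\in\{0,1\}$ make both bracketed quantities non-negative. For definiteness, observe that the second candidate in the minimum is at least $\tfrac{1}{2^{k}}>0$, so $d(a,b)=0$ can occur only when the first series vanishes, i.e. $\alpha_i=\beta_i=0$ for every $i>k$; with the normalization $a_k=0$, $b_k=1$ this forces $a=\sigma\,0\,1\,1\,1\cdots$ and $b=\sigma\,1\,0\,0\,0\cdots$, which are the two code representations of the \emph{same} junction point $p_\sigma$, whence $a=b$. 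Symmetry follows by inspection: exchanging $a$ and $b$ also exchanges $a_k$ and $b_k$, which turns $\alpha_i$ into $\beta_i$ and $\gamma_i$ into $\delta_i$ while fixing the splitting index $k$, so both brackets, and hence their minimum, are unchanged.

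The substantial step is to show that $d$ coincides with the intrinsic metric $d_{\mathrm{int}}(a,b)=\inf\{\,\ell(\xi):\xi \text{ a rectifiable curve in } S \text{ from } a \text{ to } b\,\}$. Following the construction, I would prove two inequalities. For $d_{\mathrm{int}}\le d$ I would exhibit explicit rectifiable curves whose lengths equal the two bracketed quantities: each is a nested concatenation of the line segments appearing in Case~1 and Case~2 (through $p_\sigma$, respectively across $r_\sigma q_\sigma$), and the geometric decay of the segment lengths guarantees the concatenation is rectifiable with total length exactly the corresponding series. For $d_{\mathrm{int}}\ge d$ I would use the separation property that deleting the point $p_\sigma$ together with the open segment $r_\sigma q_\sigma$ disconnects $S_{\sigma 0}$ from $S_{\sigma 1}$; hence any curve from $a\in S_{\sigma 0}$ to $b\in S_{\sigma 1}$ must pass through $p_\sigma$ or meet $r_\sigma q_\sigma$, and an induction on the level $k$ (using the self-similar scaling $\ell\mapsto\tfrac12\ell$ of the contractions $f_i$) shows its length is bounded below by the minimum in Definition~\ref{def1}. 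Once $d=d_{\mathrm{int}}$, the triangle inequality is automatic: concatenating a curve from $a$ to $b$ with one from $b$ to $c$ and passing to the infimum gives $d(a,c)\le d(a,b)+d(b,c)$. Strict intrinsicness is then immediate, since the two curves produced in the lower-bound direction realize the minimum exactly, so the infimum defining $d_{\mathrm{int}}$ is attained by an actual shortest path.

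I expect the lower-bound inequality $d_{\mathrm{int}}\ge d$ to be the main obstacle. The delicate points are making the ``every path must cross a gateway'' argument precise at all levels simultaneously, organizing the induction so that the running error term $\varepsilon\ge0$ appearing in the construction collapses to the exact series in the limit, and treating the junction (double-representation) points, where two distinct shortest paths coexist and the two branches of the induction must be reconciled so that the resulting length is independent of the chosen representation.
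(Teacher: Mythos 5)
Your approach---identifying $d$ with the infimum of lengths of admissible paths built in the construction preceding Definition~\ref{def1}, and deducing the metric axioms, the triangle inequality, and strict intrinsicness from that identification---is essentially the argument the paper intends, except that the paper compresses the whole proof into a single sentence declaring it ``obvious from the fact that $d(a,b)$ is defined as the minimum of the lengths of the admissible paths.'' Your plan is in fact more complete than the published proof: the definiteness check via the two code representations of the junction point $p_\sigma$, the explicit symmetry observation, and the flagged lower bound $d_{\mathrm{int}}\ge d$ (which the paper also leaves unproved, deferring silently to the Case~1/Case~2 construction) are all genuine points that the paper passes over, so no gap in your proposal goes beyond what the paper itself omits.
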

\begin{proof}
It is obvious from the fact that $d(a,b)$ is defined as the minimum of the lengths of the admissible paths connecting the points $a$ and $b$ in $S$.
\end{proof}

\begin{proposition}\label{temsilcidenbagimsiz}
The metric $d$ defined in Definition~\ref{def1} does not depend on the choice of the code representations of the points.
\end{proposition}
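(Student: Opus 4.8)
The plan is to reduce the statement to a single, well-understood operation: changing the code of one junction point. Since non-junction points have a unique representation, there is nothing to prove unless $a$ or $b$ is a junction point. The formula in Definition~\ref{def1} is symmetric under interchanging the two points (swapping $a\leftrightarrow b$ turns $\alpha_i$ into $\beta_i$ and $\gamma_i$ into $\delta_i$, while $k$ and $2^{-k}$ are symmetric), so it suffices to fix the code of $b$ and show the value is unchanged when the code of $a$ is switched between its two forms; the case where both points are junctions then follows by performing the two switches in succession. Write the two codes of $a$ as $a=\sigma'\,u\,v\,v\,v\cdots$ and $a=\sigma'\,v\,u\,u\,u\cdots$, where $u,v\in\{0,1,2\}$ are distinct, $\sigma'$ is their common prefix and $m=|\sigma'|$; the codes agree up to position $m$ and first disagree at $m+1$. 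The one tool used throughout is the telescoping identity $\sum_{i\ge m+2}2^{-i}=2^{-(m+1)}$, which says that the single digit of weight $2^{-(m+1)}$ and the entire constant tail of weights $2^{-(m+2)},2^{-(m+3)},\dots$ carry equal total mass.

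First I would treat the case in which $b$ already differs from the common prefix $\sigma'$ at some position $k\le m$. Then $k$, $a_k$, $b_k$ are read off from $\sigma'$ and $b$ alone, hence are identical for both codes of $a$, and the digits $a_i$ for $k<i\le m$ also coincide; the codes differ only in the tail $i\ge m+1$, which reads $(u,v,v,\dots)$ for the first code and $(v,u,u,\dots)$ for the second. Since $\alpha_i$ and $\gamma_i$ depend on the symbol $a_i$ through a fixed rule while $\beta_i,\delta_i$ do not involve $a_i$, it is enough to note that for any function $f$ of a symbol,
\[
\frac{f(u)}{2^{m+1}}+f(v)\!\!\sum_{i\ge m+2}\!\!\frac{1}{2^{i}}=\frac{f(u)+f(v)}{2^{m+1}}=\frac{f(v)}{2^{m+1}}+f(u)\!\!\sum_{i\ge m+2}\!\!\frac{1}{2^{i}},
\]
so the tail contributions to $\sum\alpha_i2^{-i}$ and to $\sum\gamma_i2^{-i}$ are unchanged. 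Hence both sums in the formula, and therefore their minimum, are invariant.

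The remaining case is when $b$ agrees with $\sigma'$ up to position $m$ and first diverges at a position $\ge m+1$; here even $k$ and $a_k$ may depend on the chosen code, and this is the heart of the matter. I would split on the symbol $b_{m+1}$. If $b_{m+1}\notin\{u,v\}$, both codes give $k=m+1$ but with $a_k$ equal to $u$ or to $v$; writing $w_x=\sum_{i>k,\,b_i=x}2^{-i}$ for each symbol $x$ (so $\sum_x w_x=2^{-k}$), a direct evaluation shows the two candidate lengths simply swap between the codes, each code yielding $2^{-k}+w_{b_k}+\min\{w_u,w_v\}$. The genuinely delicate subcase is $b_{m+1}\in\{u,v\}$, say $b_{m+1}=u$: then the second code diverges from $b$ already at $m+1$ (with $a_k=v$), while the first agrees there and diverges only at the first later position $k_1$ where $b$ leaves the constant $v$-block (this $k_1$ is finite because $b\ne a$, since otherwise $b=\sigma'uv^{\infty}=a$). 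Thus the two codes produce different indices $k$.

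I expect this last subcase to be the main obstacle. The plan is to compute both candidate lengths for each code in terms of the symbol-sorted tail masses of $b$ and to show, using $\sum_{i\ge k_1}2^{-i}=2^{-(k_1-1)}$ to absorb the intervening $v$-block, that the length the first code routes ``through $p_\sigma$'' equals the value computed by the second code; concretely one finds that both equal $\sum_{i>m+1,\;b_i\notin\{v\}}2^{-i}$ after the telescoping. It then remains to check that the minimum selects this common value: the competing candidate of the first code is $\ge 2^{-(k_1-1)}$, whereas the through-$p_\sigma$ candidate is $\le 2^{-(k_1-1)}$ (it equals $2^{-(k_1-1)}$ minus the $v$-mass of $b$'s tail), so the minimum indeed picks it out, and the same holds for the second code. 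Together with the symmetric subcase $b_{m+1}=v$, every case yields the same value, so $d(a,b)$ is independent of the chosen representations and the proof is complete.
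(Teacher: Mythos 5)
Your proposal is correct, and it is both more general and more complete than the argument the paper actually gives. The paper fixes the junction point at the top level (two codes $a_{1}\overline{a_{2}}$ and $a_{2}\overline{a_{1}}$, i.e.\ empty common prefix), assumes $x_{1}\neq a_{1}$, and works out in detail only the sub-case $x_{1}\notin\{a_{1},a_{2}\}$, where both representations produce the same splitting index $k=1$; there the two candidate sums are shown to swap via the digit identities $\alpha_{i}=\gamma_{i}'$ and $\alpha_{i}'=\gamma_{i}$, so the minimum is unchanged. This corresponds exactly to your sub-case $b_{m+1}\notin\{u,v\}$, where you reach the same swap conclusion by a slightly different bookkeeping (the tail masses $w_{x}$). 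The two genuinely delicate situations --- where the two codes of $a$ first disagree with $b$ at \emph{different} positions, so that even $k$ and $a_{k}$ change --- are dismissed in the paper with ``can be proved similarly,'' whereas you address them head on: you compute that the through-$p_{\sigma}$ candidate of the longer-agreeing code telescopes to the value produced by the other code, and you verify separately that the minimum actually selects this common value by the domination bounds $\le 2^{-(k_{1}-1)}$ versus $\ge 2^{-(k_{1}-1)}$. You also supply two reductions the paper leaves implicit: the symmetry of the formula under $a\leftrightarrow b$ (so only one code need be switched at a time, and the double-junction case follows by composing switches) and the treatment of an arbitrary common prefix $\sigma'$ via $\sum_{i\ge m+2}2^{-i}=2^{-(m+1)}$. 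In short, your route proves the full statement where the paper proves a representative special case; the only caveat is that parts of your write-up are still announced as computations to be carried out, but the identities you state are the correct ones and each one checks.
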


\begin{proof}
Let $a$ be a junction point whose code representations are of the form $a_{1}a_{2}a_{2} \ldots a_{2}a_{2}a_{2}\ldots$ and $a_{2}a_{1} a_{1} \ldots a_{1}a_{1}a_{1}\ldots$ such that $a_1 \neq a_2$ (in the general case, i.e. if the code representation of $a$ is of the form $a_{1}a_{2} \ldots a_{k-1}a_{k}a_{k+1} a_{k+1}a_{k+1}\ldots$, the claim can be proven similarly).

Let $x$ be an arbitrary point of $S$ which has the code representation \[x_{1}x_{2} \ldots x_{k-1}x_{k}x_{k+1} x_{k+2}x_{k+3}\ldots.\]

Assume that $x_{1}\neq a_{1}$. In this case, it must be $x_{1}\neq a_{2}$ or $x_{1}=a_{2}$.

\textbf{Case 1:} We first take $x_{1}\neq a_{2}$. We now investigate the distance between the points $$x_{1}x_{2} \ldots x_{k}x_{k+1} x_{k+2}x_{k+3}\ldots \ \text{and} \  a_{1}a_{2}a_{2} \ldots a_{2}a_{2}a_{2}\ldots.$$ Due to the definition of $d$, we have the following equations:

$ \alpha _{i}= \left\{
\begin{array}{cc}
0, &  x_{i}=a_{1} \\
1, &  x_{i}\neq a_{1}
\end{array}%
\right. ,$

\vspace{0.2cm}

$ \beta _{i} =\left\{
\begin{array}{cc}
0, &  a_{2}=x_{1} \\
1, &  a_{2}\neq x_{1}
\end{array}
\right. ,$

\vspace{0.2cm}

$ \gamma _{i}= \left\{
\begin{array}{cl}
 0, &  x_{i}\neq x_{1}~\ \text{and} \text{ }x_{i}\neq a_{1} \\
 1, &  otherwise
\end{array}
\right. ,$

\vspace{0.2cm}

$ \delta _{i}=\left\{
\begin{array}{cl}
0, & a_{2}\neq a_{1}\text{ }\ \text{and} \text{ }a_{2}\neq x_{1} \\
1, &  otherwise
\end{array}
\right. . $

\vspace{0.2cm}

We thus get $\beta _{i}=1$ for all $i\geq 2$ owing to the fact that  $x_{1}\neq a_{2}$. Moreover, $\alpha _{i}$ can change according to the value of  $x_{i}$ and $a_{1}$ for each $i\geq 2$. 
 It is also easily seen that $\delta _{i}=0$ for every $i\geq 2$ since $a_{2}\neq a_{1} $ and $a_{2}\neq x_{1} $. It follows that
 \[\sum\limits_{i=2}^{\infty }\frac{\alpha_{i}+\beta_{i}}{2^{i}}=\frac{1}{2}+\sum\limits_{i=2}^{\infty }\frac{\alpha_{i}}{2^{i}}\]
  and
  \[\frac{1}{2}+\sum\limits_{i=2}^{\infty }\frac{\gamma_{i}+\delta_{i}}{2^{i}}=\frac{1}{2}+\sum\limits_{i=2}^{\infty }\frac{\gamma_{i}}{2^{i}}.\]



Now we compute the distance between the points $$x_{1}x_{2} \ldots x_{k}x_{k+1} x_{k+2}x_{k+3}\ldots \  \text{and} \  a_{2}a_{1}a_{1} \ldots a_{1}a_{1}a_{1}\ldots.$$
Thanks to the definition of $d$, we have the following equations:

$ \alpha _{i}'= \left\{
\begin{array}{cc}
0, &  x_{i}=a_{2} \\
1, &  x_{i}\neq a_{2}
\end{array}%
\right. ,$

\vspace{0.2cm}

$ \beta _{i}' =\left\{
\begin{array}{cc}
0, &  a_{1}=x_{1} \\
1, &  a_{1}\neq x_{1}
\end{array}
\right. ,$

\vspace{0.2cm}

$ \gamma _{i}'= \left\{
\begin{array}{cl}
 0, &  x_{i}\neq x_{1}~\ \text{and} \text{ }x_{i}\neq a_{2} \\
 1, &  otherwise
\end{array}
\right. ,$

\vspace{0.2cm}

$ \delta _{i}'=\left\{
\begin{array}{cl}
0, & a_{2}\neq a_{1}\text{ }\ \text{and} \text{ }a_{1}\neq x_{1} \\
1, &  otherwise
\end{array}
\right. . $

\vspace{0.2cm}

Similarly, we have $\beta _{i}'=1$ for all $i\geq 2$ owing to the fact that  $x_{1}\neq a_{1}$. Moreover, $\alpha _{i}'$ can change according to the value of  $x_{i}$ and $a_{2}$ for each $i\geq 2$.
It is also obviously seen that  $\delta _{i}'=0$ for every $i\geq 2$ since $a_{1}\neq a_{2} $ and $a_{1}\neq x_{1} $. This shows that
\[\sum\limits_{i=2}^{\infty }\frac{\alpha_{i}'+\beta_{i}'}{2^{i}}=\frac{1}{2}+\sum\limits_{i=2}^{\infty }\frac{\alpha_{i}'}{2^{i}}\]
and
\[\frac{1}{2}+\sum\limits_{i=2}^{\infty }\frac{\gamma_{i}'+\delta_{i}'}{2^{i}}=\frac{1}{2}+\sum\limits_{i=2}^{\infty }\frac{\gamma_{i}'}{2^{i}}.\]





Finally we show that $\alpha_{i}=\gamma_{i}'$ and $\alpha_{i}'=\gamma_{i}$ for all $i\geq 2$ respectively. We have already known that $a_{1}\neq a_{2}$, $x_{1}\neq a_{1}$ and $x_{1}\neq a_{2}$.

Assume that  $\gamma_{i}'=0$ for a fixed $i$. In this case, we have $x_{i}\neq a_{2}$ and $x_{i}\neq x_{1}$. We thus have $x_{i}= a_{1}$. Namely, it is $\alpha_{i}=0$. Let $\gamma_{i}'=1$ for a fixed $i$. Hence it must be $x_{i}= a_{2}$ or $x_{i}= x_{1}$. This shows that $x_{i}\neq a_{1}$. That is we obtain $\alpha_{i}=1.$

Suppose that $\gamma_{i}=0$ for a fixed $i$. We thus have $x_{i}\neq x_{1}$ and $x_{i}\neq a_{1}$ and this shows that $x_{i}= a_{2}$. So we get $\alpha_{i}'=0$. Let $\gamma_{i}'=1$ for a fixed $i$. Therefore it must be $x_{i}= x_{1}$ or $x_{i}= a_{1}$. It follows that $x_{i}\neq a_{1}$ and thus we get $\alpha_{i}'=1.$

This concludes the proof in Case 1.

\textbf{Case 2:} Let $x_{1}= a_{2}$. The assertion can be proved similarly as above.
\end{proof}

\section{Some Examples}
In this section we give two examples in which we compute the distance between two kinds of pair of points in $S$.

\begin{example}
Let $a$ and $b$ be the points in $S$ whose representations are $\overline{012}=012012012\cdots$ and $\overline{1}=111\cdots$ respectively (see Figure~\ref{figex1} for the place of the points).

\begin{figure}[h!]
\centering\includegraphics[scale=0.7]{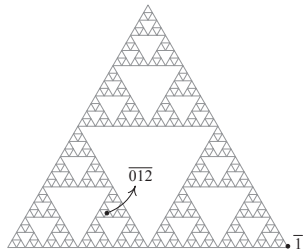}
\caption{The points $a$ and $b$ coded by $\overline{012}$ and $\overline{1}$ respectively.}\label{figex1}
\end{figure}

To compute $d(a,b)$ we need the natural number $k$ defined in (\ref{find-k}). Since the first term of the representations are different, we get $k=1$. Easy calculations give us $\beta_i=1$, $\delta_i=1$,
\[
\alpha_i=
\left\{
  \begin{array}{ccc}
    0 &  ; &  $i$ \ \equiv \ $2$ \ (mod \, 3) \\
    1 &  ; & otherwise\\
  \end{array}
\right.
\]
and
\[
\gamma_i=
\left\{
  \begin{array}{ccc}
    0 &  ; &  $i$ \ \equiv \ $0$ \ (mod \, 3) \\
    1 &  ; & otherwise\\
  \end{array}
\right.
\]
for all $i\geq k+1=2$. We then obtain
\[
\sum\limits_{i=2}^{\infty }\frac{\alpha_{i}+\beta_{i}}{2^{i}}
=\sum\limits_{m=1}^{\infty }\left(  \frac{1}{2^{3m-1}} + \frac{2}{2^{3m}}+ \frac{2}{2^{3m+1}}\right)=\frac{5}{7}
\]
and
\[
\frac{1}{2}+\sum\limits_{i=2}^{\infty }\frac{\gamma_{i}+\delta_{i}}{2^{i}}
=\frac{1}{2}+\sum\limits_{m=1}^{\infty }\left(  \frac{2}{2^{3m-1}} + \frac{1}{2^{3m}}+ \frac{2}{2^{3m+1}}\right)=\frac{1}{2}+\frac{6}{7}
\]
which says that $d(a,b)$ is the minimum value $\displaystyle \frac{5}{7}$.
\end{example}

\begin{example}
Let $a$ and $b$ be the points in $S$ whose representations are $000\overline{2}=000222222\cdots$ and $0122\overline{0}=0122000000\cdots$ respectively (see Figure~\ref{figex2} for the place of the points).

\begin{figure}[h!]
\centering\includegraphics[scale=0.7]{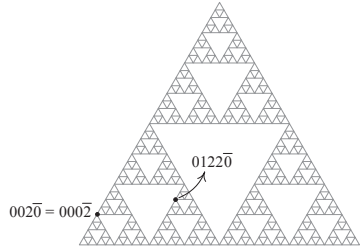}
\caption{The points $a$ and $b$ coded by $000\overline{2}$ and $0122\overline{0}$ respectively.}\label{figex2}
\end{figure}

Since the second term of the representations are different, we get $k=2$. One can obtain $\alpha_i=1$ for $i\geq k+1=3$, $\beta_3=\beta_4=1$ and $\beta_i=0$ for $i\geq 5$,  $\gamma_3=1$ and $\gamma_i=0$ for $i\geq 4$, $\delta_3=\delta_4=0$ and $\delta_i=1$ for $i\geq 5$. We then obtain
\[
\sum\limits_{i=3}^{\infty }\frac{\alpha_{i}+\beta_{i}}{2^{i}}
=\frac{2}{2^3}+ \frac{2}{2^4} + \sum\limits_{i=5}^{\infty }  \frac{1}{2^i}=\frac{7}{16}
\]
and
\[
\frac{1}{2^2}+\sum\limits_{i=3}^{\infty }\frac{\gamma_{i}+\delta_{i}}{2^{i}}
=\frac{1}{4}+\frac{1}{2^3}+ \sum\limits_{i=5}^{\infty }  \frac{1}{2^i}=\frac{7}{16}
\]
which says that $d(a,b)$ is the value $\displaystyle \frac{7}{16}$. Notice that two values are equal and it means that there exist at least two shortest paths between the points.

Indeed, since it is a junction point, the point $000\overline{2}$ has two code representations and one can take the representation of this point as $002\overline{0}$. In this case the computation yields
$k=2$, $\alpha_i=1$ for $i\geq 3$, $\beta_3=\beta_4=1$ and $\beta_i=0$ for $i\geq 5$,  $\gamma_3=0$ and $\gamma_i=1$ for $i\geq 4$, $\delta_3=\delta_4=0$ and $\delta_i=1$ for $i\geq 5$. We then get by easy calculation $d(a,b)=\displaystyle \frac{7}{16}$ again as mentioned in Proposition~\ref{temsilcidenbagimsiz}.
\end{example}


\end{document}